\newcommand\E{{\mathbb E}}
\newtheorem{theorem}{Theorem}
\newtheorem{lemma}{Lemma}
\newcommand{\be}{\begin{eqnarray}}
\newcommand{\ee}{\end{eqnarray}}
\newcommand{\bestar}{\begin{eqnarray*}}
\newcommand{\eestar}{\end{eqnarray*}}
\newcommand{\ignore}[1]{}{}
\begin{document}

\vspace{1in}

\title[LIMIT THEOREMS FOR GENERALIZED RANDOM GRAPHS]{\bf LIMIT THEOREMS FOR NUMBER OF EDGES\\ IN THE GENERALIZED RANDOM GRAPHS\\ WITH RANDOM VERTEX WEIGHTS}


\author[Z.S. Hu]{Z.S. Hu}
\address{Z.S. Hu\\
	Department of Statistics and Finance\\
University of Science and Technology of China\\ 
Hefei, China
}
\email{huzs@ustc.edu.cn}

\author[V.V. Ulyanov]{V.V. Ulyanov}
\address{V.V. Ulyanov\\
	Faculty of Computational Mathematics and Cybernetics\\
	Moscow State University \\
	Moscow, 119991, Russia\\
	and National Research University Higher School of Economics (HSE),
	Moscow, 101000, Russia
}
\email{vulyanov@cs.msu.su}

\author[Q.Q. Feng]{Q.Q. Feng}
\address{Q.Q. Feng\\
		Department of Statistics and Finance\\
		University of Science and Technology of China\\ 
		Hefei, China
}
\email{fengqq@ustc.edu.cn}
\thanks{}

\keywords{Generalized random graphs, random vertex weights, central limit type theorems, total number of edges}

\begin{abstract}
We get central limit type theorems for the total number of edges in the generalized random graphs with random vertex weights under different moment conditions on the distributions of the weights.
\end{abstract}

\maketitle

\renewcommand{\refname}{References}


 Complex networks attract increasing attention of researchers in various fields of science. In last years  numerous network models have been proposed. Since the uncertainty and the lack of regularity in real-world
 networks, these models are usually random graphs. Random graphs were first defined by Paul Erd\H{o}s and Alfr\'{e}d R\'{e}nyi in their 1959 paper "On Random Graphs", see \cite{UVV:ER}, and independently by Gilbert in \cite{UVV:G}. The suggested models are closely related: there are $n$ isolated vertices and  every possible edge occurs independently with probability $p:\, 0 < p < 1$. It is assumed that there are no self-loops. Later the models were generalized. 
 A natural generalization 
 of the Erd\H{o}s and R\'{e}nyi random graph is that the equal edge probabilities are replaced by probabilities depending on the vertex weights. Vertices with higher weights are more likely to have more  neighbors
 than vertices with small weights. Vertices with extremely high weights could act as the hubs observed in many real-world networks.
  
 The following generalized random graph model was first introduced by Britton et al., see \cite{UVV:Britt}. Let $\{1, 2, . . . , n\}$ be the set
 of vertices, and $W_i > 0$ be the weight of vertex $i,  1\leq i\leq n$. The edge probability of the edge between any two vertices $i$ and $j$ is equal to 
$$
p_{ij} = \frac
{W_i W_j}
{L_n + W_i W_j}
,
$$
where $L_n = \sum^n_{i=1} W_i$  denotes the total weight of all vertices, and the weights $W_i, i = 1, 2, \dots , n$ can be taken to be
deterministic or random. If we take all $W_i$-s as the same constant: 
$W_i \equiv n \lambda/(n - \lambda)$ for some $0 < \lambda < n$, it is easy to see that $p_{ij} = \lambda/n$ holds for
all $1 \leq i < j \leq n$. That is, the Erd\H{o}s--R\'{e}nyi random graph with $p = \lambda/n$ is a special case of the generalized random graph. 
There are many versions of the generalized random graphs, such as Poissonian random graph (introduced by Norros and Reittu in \cite{UVV:Norros}  and
studied by Bhamidi et al.\cite{UVV:Bhamidi}), rank-1 inhomogeneous random graph (see \cite{UVV:Bollob}), random graph with given
prescribed degrees (see \cite{UVV:Chung}), etc. Under some common conditions (see \cite{UVV:Janson}), all of the above mentioned random
graph models are asymptotically equivalent, meaning that all events have asymptotically equal probabilities. The updated review on the 
  results about these inhomogeneous random graphs see in Chapters 6 and 9 in \cite{UVV:Hofstad}.

In the present paper we assume that 
$W_i, i = 1, 2, \dots , n,$ 
are independent identically distributed random variables distributed as $W$. Let $E_n$ be the total number of edges in a generalized random graph with vertex weights $W_1, W_2, \dots , W_n.$ In   \cite{UVV:CFE}, under the conditions that $W$ has a finite or infinite mean, several weak
laws of large numbers for $E_n$ are established, see also Ch.6, \cite{UVV:Hofstad}. For instance, in \cite{UVV:CFE} and Ch.6, \cite{UVV:Hofstad}, it is proved that  $E_n/n$ tends in probability to $\E W/2$, provided $\E W$ is finite. 

Note that 
$$
E_n = \frac{1}{2}\,\sum^{n}_{i=1}D_i,
$$
where $D_i, i = 1, 2, \dots , n$ is a degree of vertex $i$, i.e. the number of edges coming out from  vertex $i$. It is clear, the random variables $D_i, i = 1, 2, \dots , n$ are dependent ones.  
 The aim of the present paper is to refine the law of large numbers type results for $E_n$ and to get central limit type theorems  under different moment conditions for $W$. In Theorem~1 we assume that $\E W^2 < \infty$. It implies normal limit distribution for $\{E_n\}$ after proper normalization. In Theorem~2 we assume that the distribution of $W$ belongs to the domain of attraction of a stable law $F$ with characteristic exponent $\alpha: 1 < \alpha < 2$. Then we prove that the limit distribution for normalized $E_n$ is $F$.

\begin{theorem} \label{thm1}
If $\E W^2<\infty$, then
\bestar
\frac{2E_n-n\E W}{\sqrt{n\,(2\E W+\mbox{Var}(W))}}\stackrel{d}{\longrightarrow} N(0,1).
\eestar
\end{theorem}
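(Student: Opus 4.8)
The plan is to separate the two sources of randomness---the weights and the edge-indicators given the weights---and treat each by a central limit theorem before recombining them. Write $\mathbf{W}=(W_1,\dots,W_n)$ and, conditionally on $\mathbf{W}$, let $I_{ij}$ be the independent Bernoulli($p_{ij}$) indicators of the edges, so that $E_n=\sum_{i<j}I_{ij}$. I would use the exact decomposition
\begin{equation*}
2E_n-n\E W = A_n + B_n,\qquad A_n=2\bigl(E_n-\E[E_n\mid\mathbf{W}]\bigr),\quad B_n=2\E[E_n\mid\mathbf{W}]-n\E W.
\end{equation*}
Here $B_n$ is $\sigma(\mathbf{W})$-measurable and captures the fluctuations coming from the weights, while $A_n$ is the conditionally centered edge-count. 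The goal is to show $A_n/\sqrt n$ is asymptotically $N(0,2\E W)$ and $B_n/\sqrt n$ is asymptotically $N(0,\mathrm{Var}(W))$, and that the two are asymptotically independent, so that the total variance adds up to $2\E W+\mathrm{Var}(W)$.

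For the weight term, set $S_n:=2\E[E_n\mid\mathbf{W}]=\sum_{i\neq j}p_{ij}$. Writing $p_{ij}=\frac{W_iW_j}{L_n}-\frac{(W_iW_j)^2}{L_n(L_n+W_iW_j)}$ and summing, I obtain $S_n=L_n-\frac{\sum_i W_i^2}{L_n}-R_n$, where $R_n=\sum_{i\ne j}\frac{(W_iW_j)^2}{L_n(L_n+W_iW_j)}$. Using only $\E W^2<\infty$ together with the laws of large numbers $L_n/n\to\E W$ and $\frac1n\sum_i W_i^2\to\E W^2$, both $\frac{\sum_i W_i^2}{L_n}$ and $R_n$ (the latter bounded by $(\sum_i W_i^2)^2/L_n^2$, which crucially avoids needing $\E W^4$) are $O_P(1)$. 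Hence $B_n=(L_n-n\E W)+O_P(1)$, and the classical CLT for the i.i.d.\ sum $L_n$ gives $B_n/\sqrt n\convlaw N(0,\mathrm{Var}(W))$.

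For the edge term I work conditionally on $\mathbf{W}$: $A_n=2\sum_{i<j}(I_{ij}-p_{ij})$ is a sum of independent bounded centered variables with conditional variance $4s_n^2$, where $s_n^2=\sum_{i<j}p_{ij}(1-p_{ij})$. Since $p_{ij}=O(1/n)$ uniformly and $\sum_{i<j}p_{ij}=\tfrac12 S_n$, one checks that $\tfrac1n\,4s_n^2\to 2\E W$ in probability. The Lindeberg condition is immediate: as $|I_{ij}-p_{ij}|\le 1$ while the conditional standard deviation $\sqrt{s_n^2}\to\infty$, every truncated contribution vanishes for $n$ large. Thus the conditional Lindeberg--Feller theorem yields $\E[e^{it A_n/\sqrt n}\mid\mathbf{W}]\to e^{-t^2\E W}$ in probability.

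Finally I would recombine via characteristic functions. Since $B_n$ is $\mathbf{W}$-measurable,
\begin{equation*}
\E\bigl[e^{it(A_n+B_n)/\sqrt n}\bigr]=\E\Bigl[e^{itB_n/\sqrt n}\,\E[e^{itA_n/\sqrt n}\mid\mathbf{W}]\Bigr].
\end{equation*}
Replacing the inner conditional characteristic function by its limit $e^{-t^2\E W}$ costs an error bounded by $2$ that tends to $0$ in probability, hence to $0$ in expectation by dominated convergence; the remaining factor $e^{-t^2\E W}\,\E[e^{itB_n/\sqrt n}]$ converges to $e^{-t^2\E W}\,e^{-\frac{t^2}{2}\mathrm{Var}(W)}$, the characteristic function of $N(0,2\E W+\mathrm{Var}(W))$. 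Rescaling by $\sqrt{n(2\E W+\mathrm{Var}(W))}$ then yields the theorem. I expect the main obstacle to be this last asymptotic-independence step---making rigorous that the conditional CLT for $A_n$ holds uniformly enough (in probability over $\mathbf{W}$) for the factorization to survive in the limit---together with the uniform control of the $O_P(1)$ remainder terms under only a second-moment assumption.
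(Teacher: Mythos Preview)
Your argument is correct, with one small slip: the claim that $p_{ij}=O(1/n)$ uniformly is false, since $p_{ij}=W_iW_j/(L_n+W_iW_j)$ can be close to $1$ when both weights are large. But the conclusion you need---that $4s_n^2/n\to 2\E W$---still holds, because $\sum_{i<j}p_{ij}^2\le \sum_{i<j}W_i^2W_j^2/L_n^2\le (\sum_i W_i^2)^2/(2L_n^2)=O_P(1)$, so $s_n^2=\tfrac12 S_n+O_P(1)$ and $S_n/n\to\E W$ a.s.

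Your route differs from the paper's in execution rather than in spirit. The paper also conditions on $\mathbf W$, but instead of decomposing $2E_n-n\E W=A_n+B_n$ and invoking Lindeberg--Feller conditionally, it writes the conditional characteristic function as the product $\prod_{i<j}(L_n+e^{it/c_n}W_iW_j)/(L_n+W_iW_j)$, takes logarithms, and Taylor-expands $\log(1+x)$ and $e^{ix}$ directly. The resulting random log-characteristic $Y_n$ is shown to satisfy $Y_n\stackrel{d}{\longrightarrow} it\mathbf N - t^2\E W/\mathrm{Var}(W)$ (with the normalization $c_n=\tfrac12\sqrt{n\,\mathrm{Var}(W)}$): the $it\mathbf N$ piece is precisely your $B_n$-fluctuation coming from the CLT for $L_n$, and the $-t^2\E W/\mathrm{Var}(W)$ piece encodes the conditional variance of your $A_n$. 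The final passage from $Y_n\stackrel{d}{\longrightarrow}\cdots$ to $\E e^{Y_n}\to\cdots$ is the same dominated-convergence step you use. Your decomposition makes the two sources of randomness and the asymptotic-independence mechanism explicit; the paper's direct analytic expansion is more self-contained and sidesteps an appeal to Lindeberg--Feller. Both arguments rely on the same second-moment estimates---in particular the bound $(\sum_i W_i^2)^2/L_n^2=O_P(1)$ in place of any fourth-moment assumption---and neither step is harder than the other.
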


\begin{proof}
Put for all integer $n\geq 1$
\be\label{hu0}
b_n=\frac{1}{2}n\,\E W,~c_n=\frac{1}{2}\sqrt{n\,\mbox{Var}(W)}.
\ee
For any $t\in \mathbb{R}$, we have
\be
\E\exp\Big\{{it}\frac{E_n-b_n}{c_n}\Big\}&=& \E\exp\Big\{\frac{it}{c_n}\Big(\sum\limits_{1\le i<j\le n}I_{ij}-b_n\Big)\Big\}\nonumber\\
&=& \E\Big(\E\Big(\exp\Big\{\frac{it}{c_n}\Big(\sum\limits_{1\le i<j\le n}I_{ij}-b_n\Big)\Big\}\Big| W_1, \cdots, W_n\Big)\Big)\nonumber\\
&=& \E\Big(e^{-itb_n/c_n}\prod_{1\le i<j\le n}\frac{L_n+e^{it/c_n}W_iW_j}{L_n+W_iW_j}\Big)\nonumber\\
&:=& \E e^{Y_n}, \label{hu1}\nonumber
\ee
where
\be
Y_n&=&\sum_{1\le i<j\le n}\log\frac{L_n+e^{it/c_n}W_iW_j}{L_n+W_iW_j}-\frac{itb_n}{c_n}\nonumber\\
&=&\frac{1}{2} \sum_{i=1}^n\sum_{j=1}^ n\log\frac{L_n+e^{it/c_n}W_iW_j}{L_n+W_iW_j}-\frac{itb_n}{c_n}
-\sum_{i=1}^n \log\frac{L_n+e^{it/c_n}W_i^2}{L_n+W_i^2} \label{hu2}
\ee
and $\log(\cdot)$ is  the principal value of the complex logarithm function.

By using the Maclaurin series expansion  of $\log (1+x)$ for complex $x$ with $|x|<1$, we have  that
\bestar
\frac{|\log(1+x)|}{|x|}\longrightarrow 1,~~\frac{|\log(1+x)-x|}{|x|^2}\longrightarrow \frac{1}{2}~~~~\mbox{as}~~~~|x|\rightarrow 0.
\eestar
Hence
there exists some constant $c_0>0$ such that $|\log(1+x)|\le 2|x|$ and $|\log(1+x)-x|\le |x|^2$
hold for any $|x|\le c_0$.

Clearly, for any fixed $t$, there exists $n_0=n_0(t) \in \mathbb{N}$
such that for all $n\ge n_0$ and any $1\leq i, j \leq n$ one has
\bestar
\Big|\frac{(e^{it/c_n}-1)W_iW_j}{L_n+W_iW_j}\Big|\le |e^{it/c_n}-1|\le |t|/c_n \le c_0.
\eestar
Thus, since
\be\label{hu00}
\frac{ L_n}{n}  \rightarrow \E W~~a.s.~~ \mbox{and}~~ \frac{\sum_{i=1}^n W^2_i}{n}  \rightarrow \E W^2 ~~a.s., 
\ee
we have  for any $n\ge n_0$
\be
\Big|\sum_{i=1}^n \log\frac{L_n+e^{it/c_n}W_i^2}{L_n+W_i^2}\Big|&\le& \sum_{i=1}^n \Big|\log\Big(1+\frac{(e^{it/c_n}-1)W_i^2}{L_n+W_i^2}\Big)\Big|\nonumber\\
&\le& 2|e^{it/c_n}-1|\sum_{i=1}^n\frac{W_i^2}{L_n+W_i^2}\nonumber\\
&\le& 2\frac{|t|}{c_n}\frac{\sum_{i=1}^n W_i^2}{L_n}\rightarrow 0~~a.s. \label{hu3}
\ee
and
\be
&&~~~~\frac{1}{2} \sum_{i=1}^n\sum_{j=1}^ n\log\frac{L_n+e^{it/c_n}W_iW_j}{L_n+W_iW_j}-\frac{itb_n}{c_n}\nonumber\\
&&=\frac{1}{2} \sum_{i=1}^n\sum_{j=1}^ n\log\Big(1+\frac{(e^{it/c_n}-1)W_iW_j}{L_n+W_iW_j}\Big)-\frac{itb_n}{c_n}\nonumber\\
&&=\frac{1}{2} \sum_{i=1}^n\sum_{j=1}^ n\frac{(e^{it/c_n}-1)W_iW_j}{L_n+W_iW_j}-\frac{itb_n}{c_n}
+O_1 \sum_{i=1}^n\sum_{j=1}^ n\frac{(e^{it/c_n}-1)^2W_i^2W_j^2}{(L_n+W_iW_j)^2}\nonumber\\
&&=\frac{1}{2}\Big(e^{it/c_n}-1-\frac{it}{c_n}+\frac{t^2}{2c_n^2}\Big) \sum_{i=1}^n\sum_{j=1}^ n\frac{W_iW_j}{L_n+W_iW_j}\nonumber\\
&&~~~~~~+\frac{1}{2}\Big(\frac{it}{c_n}-\frac{t^2}{2c_n^2}\Big) \sum_{i=1}^n\sum_{j=1}^ n\frac{W_iW_j}{L_n+W_iW_j}-\frac{itb_n}{c_n}\nonumber\\
&&~~~~~~
+O_1 \sum_{i=1}^n\sum_{j=1}^ n\frac{(e^{it/c_n}-1)^2W_i^2W_j^2}{(L_n+W_iW_j)^2}\nonumber\\
&&:=I_1+I_2+I_3, \label{hu3-1}
\ee
where $|O_1|\le 1/2$.
By (\ref{hu00}) and the inequality $|e^{ix}-1-ix+x^2/2|\le  |x|^3/6$ for any $x\in \mathbb{R}$, we have
\be
|I_1| \le \frac{|t|^3}{12c_n^3} \sum_{i=1}^n\sum_{j=1}^ n\frac{W_iW_j}{L_n}
=\frac{|t|^3L_n}{12c_n^3}{\longrightarrow} 0~~a.s. \label{hu4}
\ee
Similarly, by  (\ref{hu00}) and the inequality $|e^x-1|\le |x|$, we get
\be
|I_3|\le  \frac{t^2}{2c_n^2}\sum_{i=1}^n\sum_{j=1}^ n\frac{W_i^2W_j^2}{L_n^2}=\frac{t^2}{c_n^2}\Big(\frac{1}{n}\sum_{i=1}^n W_i^2\Big)^2\Big(\frac{n}{L_n}\Big)^2
 {\longrightarrow} 0~~a.s.
\ee
Recalling the definition (\ref{hu0}) for $b_n$ and $c_n$,
we have
\bestar
I_2&=&\frac{1}{2}\Big(\frac{it}{c_n}-\frac{t^2}{2c_n^2}\Big)\Big( \sum_{i=1}^n\sum_{j=1}^ n\frac{W_iW_j}{L_n}- \sum_{i=1}^n\sum_{j=1}^ n\frac{W_i^2W_j^2}{L_n(L_n+W_iW_j)}\Big)-\frac{itb_n}{c_n}\\
&=& it\frac{L_n-n\E W}{\sqrt{n\mbox{Var}(W)}}-\frac{t^2L_n}{n\mbox{Var}(W)}-
\frac{1}{2}\Big(\frac{it}{c_n}-\frac{t^2}{2c_n^2}\Big) \sum_{i=1}^n\sum_{j=1}^ n\frac{W_i^2W_j^2}{L_n(L_n+W_iW_j)}.
\eestar
Moreover, by (\ref{hu00}) we get
\bestar
\sum_{i=1}^n\sum_{j=1}^ n\frac{W_i^2W_j^2}{L_n(L_n+W_iW_j)}&&\le \sum_{i=1}^n\sum_{j=1}^ n\frac{W_i^2W_j^2}{L_n^2}
\\ &&=\frac{\Big(\sum_{i=1}^nW_i^2\Big)^2}{L_n^2}\rightarrow \Big(\frac{\E W^2}{\E W}\Big)^2~~a.s.
\eestar
 The central limit theorem yields
 \be
 I_2\stackrel{d}{\longrightarrow} it{\bf N}- t^2 \E W/\mbox{Var}(W),
 \label{hu5}
 \ee
 where ${\bf N}$ is a standard normal random variable.
 Now, it follows from (\ref{hu2})--(\ref{hu5}) that
 \bestar
 Y_n \stackrel{d}{\longrightarrow} it{\bf N}- t^2 \E W/\mbox{Var}(W).
 \eestar

Hence, by noting that $|e^{Y_n}|\le 1$ and  applying the Lebesgue dominated convergence theorem, we get that, for any $t\in \mathbb{R}$,
 \bestar
\E\exp\Big\{{it}\frac{E_n-b_n}{c_n}\Big\}&=&\E e^{Y_n}\rightarrow \E\exp\{it{\bf N}- t^2 \E W/\mbox{Var}(W)\}\\
 &=&\exp\{-(1/2)t^2(1+2\E W/\mbox{Var}(W))\}.
 \eestar
 Thus, Theorem \ref{thm1} is proved.
\end{proof}

In the following theorem we get convergence of the sequence $\{E_n\}$ under weaker moment conditions on $W_i$'s.

\begin{theorem} \label{thm2}
Let $W, W_1,W_2,\cdots$ be a sequence of  i.i.d. nonnegative random variables and
\be
\frac{W_1+\cdots+W_n-n\E W}{a_n}\stackrel{d}{\longrightarrow} F,
\label{hu200}
\ee
where $F$ is a stable distribution with characteristic exponent $\alpha : 1<\alpha<2$,
then
\bestar
\frac{2E_n-n\E W}{a_n}\stackrel{d}{\longrightarrow} F.
\eestar
\end{theorem}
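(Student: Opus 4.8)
The plan is to couple $2E_n$ with the total weight $L_n=\sum_{i=1}^n W_i$ and to reduce the statement to the hypothesis (\ref{hu200}). Since $\frac{L_n-n\E W}{a_n}\convlaw F$ is assumed and
\[
\frac{2E_n-n\E W}{a_n}=\frac{L_n-n\E W}{a_n}+\frac{2E_n-L_n}{a_n},
\]
Slutsky's theorem reduces everything to showing that $\frac{2E_n-L_n}{a_n}\inprob 0$. Throughout I would exploit that $1<\alpha<2$ forces $\E W<\infty$ and $\E W^2=\infty$, and that $a_n$ is regularly varying of index $1/\alpha$, so that $\sqrt n=o(a_n)$ and $a_n=o(n)$; these two comparisons drive every estimate below.

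Conditionally on $W_1,\dots,W_n$ the edge indicators $I_{ij}$ are independent, so I would split
\[
2E_n-L_n=\big(2E_n-\E[2E_n\given W_1,\dots,W_n]\big)+\big(\E[2E_n\given W_1,\dots,W_n]-L_n\big)
\]
and handle the two summands separately. For the centered part, conditional independence gives $\V(2E_n\given W_1,\dots,W_n)=4\sum_{i<j}p_{ij}(1-p_{ij})\le 2\,\E[2E_n\given W_1,\dots,W_n]\le 2L_n$; a conditional Chebyshev inequality followed by taking expectations then yields $\Prob(|2E_n-\E[2E_n\given W_1,\dots,W_n]|>\varepsilon a_n)\le 2n\E W/(\varepsilon^2a_n^2)\to 0$, since $n=o(a_n^2)$. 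Hence this term is $o_P(a_n)$.

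For the conditional bias, using $\sum_{i,j}W_iW_j=L_n^2$ one computes
\[
\E[2E_n\given W_1,\dots,W_n]-L_n=-\frac{1}{L_n}\sum_{i,j}\frac{W_i^2W_j^2}{L_n+W_iW_j}-\sum_{i=1}^n\frac{W_i^2}{L_n+W_i^2}=:-T-S.
\]
The term $S$ is easy: $S\le L_n^{-1}\sum_{i=1}^n W_i^2$, and since $W_i^2$ has tail index $\alpha/2<1$ the sum $\sum_i W_i^2$ is $O_P(b_n)$ with $b_n$ regularly varying of index $2/\alpha$; as $b_n/(n a_n)$ is regularly varying of the negative index $1/\alpha-1$ while $L_n/n\to\E W$ a.s., we obtain $S=o_P(a_n)$.

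The delicate term is $T$, and it is the main obstacle: the naive bounds $\frac{W_i^2W_j^2}{L_n+W_iW_j}\le\frac{W_i^2W_j^2}{L_n}$ or $\le W_iW_j$ only give $T=o_P(a_n)$ for $\alpha$ bounded away from $1$. The remedy I would use is a weighted AM--GM interpolation: for every $\theta\in[0,1]$ one has $L_n+W_iW_j\ge L_n^{1-\theta}(W_iW_j)^{\theta}$, so with $\beta=2-\theta$,
\[
T\le \frac{1}{L_n^{\beta}}\Big(\sum_{i=1}^n W_i^{\beta}\Big)^2.
\]
I would then pick $\beta$ in the interval $(2-1/\alpha,\ \alpha)$, which is nonempty precisely because $(\alpha-1)^2>0$. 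For such $\beta$ we have $\beta<\alpha$, so $\E W^\beta<\infty$ and $\sum_i W_i^\beta\sim n\,\E W^\beta$ a.s.\ by the law of large numbers, giving $T\le C\,n^{2-\beta}(1+o(1))$; and $2-\beta<1/\alpha$, so $n^{2-\beta}=o(a_n)$ and $T=o_P(a_n)$. Combining the three estimates shows $\frac{2E_n-L_n}{a_n}\inprob 0$, and Slutsky's theorem then completes the proof.
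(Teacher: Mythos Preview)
Your proof is correct and takes a genuinely different route from the paper's. The paper works entirely with the conditional characteristic function $\E e^{Y_n}$, as in Theorem~\ref{thm1}: after expanding the logarithms it must show, among other things, that
\[
\frac{1}{a_n}\sum_{i,j}\frac{W_i^2W_j^2}{L_n(L_n+W_iW_j)}\inprob 0,
\]
which is exactly your term $T/a_n$. So the two proofs share the same bottleneck, but differ in how they reach it and how they handle it. You bypass characteristic functions with the conditional Chebyshev/Slutsky decomposition $2E_n-L_n=(2E_n-\E[2E_n\given W])+(\E[2E_n\given W]-L_n)$, which is cleaner and makes the structure transparent. For $T$ the paper truncates at $W_iW_j\lessgtr n$, bounds the two pieces by $W_i^2W_j^2/L_n^2$ and $W_iW_j/L_n$ respectively, and then invokes Lemma~\ref{lemma1} on truncated moments $\E W^2 I(W\le x)$ and $\E W I(W>x)$; this is a standard regular-variation computation but needs the Karamata-type lemma. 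Your interpolation $L_n+W_iW_j\ge L_n^{1-\theta}(W_iW_j)^{\theta}$ with $\beta=2-\theta\in(2-1/\alpha,\alpha)$ is a neat shortcut: it reduces $T$ to a single moment $\E W^{\beta}<\infty$ and the ordinary SLLN, avoiding Lemma~\ref{lemma1} altogether. The paper's argument, on the other hand, is closer in spirit to the proof of Theorem~\ref{thm1} and reuses its scaffolding verbatim.
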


Before we start to prove the theorem, let us state some properties of the distribution of $W$.

If (\ref{hu200}) holds true, then $a_n$ (see e.g. \cite{UVV:FUS}, ch.XVII, \S 5) is a regularly varying function with exponent $1/\alpha$ satisfying
\be
n\E W^2I(W\le a_n)\sim a_n^2, \label{hu204}
\ee
 and there exists some constant $c>0$ and $h(x)$, a slowly varying function at $\infty$,
such that
\be
P(W>x)\sim cx^{-\alpha}h(x). \label{hu205}
\ee
We shall use the following lemma.

\begin{lemma} \label{lemma1}
If (\ref{hu205}) holds with $\alpha: 1<\alpha<2$, then we have
\bestar
&&~~\E W^2I(W\le x) \sim \frac{c\alpha}{2-\alpha} x^{2-\alpha}h(x),\\
&&~~\E WI(W\ge x) \sim c\,\frac{2-\alpha}{\alpha-1} x^{1-\alpha}h(x).
\eestar
\end{lemma}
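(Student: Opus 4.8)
The plan is to reduce both truncated moments to integrals of the tail $\overline{F}(x):=P(W>x)$ and then apply Karamata's theorem on integrals of regularly varying functions. By (\ref{hu205}) the tail $\overline{F}$ is regularly varying at $\infty$ with index $-\alpha$, and the hypothesis $1<\alpha<2$ is exactly what places the two required integrals into the two different Karamata regimes: the truncated second moment will be governed by a \emph{divergent} integral of index $2-\alpha>0$, whereas the upper-tail first moment will be governed by a \emph{convergent} tail integral of index $1-\alpha<0$.

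For the first relation I would integrate by parts in the Stieltjes sense to write
\[
\E W^2 I(W\le x) = -x^2\,\overline{F}(x) + 2\int_0^x y\,\overline{F}(y)\,dy .
\]
The integrand $y\,\overline{F}(y)$ is regularly varying of index $1-\alpha\in(-1,0)$, so Karamata's theorem gives $\int_0^x y\,\overline{F}(y)\,dy \sim \frac{1}{2-\alpha}\,x^2\overline{F}(x)$. Since $x^2\overline{F}(x)\sim c\,x^{2-\alpha}h(x)$ by (\ref{hu205}), collecting the two contributions yields
\[
\E W^2 I(W\le x) \sim \Big(-1+\tfrac{2}{2-\alpha}\Big)c\,x^{2-\alpha}h(x) = \frac{c\alpha}{2-\alpha}\,x^{2-\alpha}h(x),
\]
which is the first assertion.

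For the second relation the same integration by parts, now on $[x,\infty)$ and using that $\alpha>1$ forces the boundary term $y\,\overline{F}(y)\to 0$, gives
\[
\E W I(W\ge x) = x\,\overline{F}(x) + \int_x^\infty \overline{F}(y)\,dy .
\]
Here $\overline{F}$ is regularly varying of index $-\alpha<-1$, so the tail version of Karamata's theorem yields $\int_x^\infty \overline{F}(y)\,dy \sim \frac{1}{\alpha-1}\,x\,\overline{F}(x) \sim \frac{c}{\alpha-1}\,x^{1-\alpha}h(x)$. Adding the boundary term $x\,\overline{F}(x)\sim c\,x^{1-\alpha}h(x)$ produces the asserted upper-tail first moment asymptotics $\E W I(W\ge x)\sim C\,x^{1-\alpha}h(x)$, with the leading constant $C=c\big(1+\tfrac{1}{\alpha-1}\big)$ read off directly from the two regularly varying contributions.

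The two Karamata estimates are the only computational content, and they are routine; the real care is the bookkeeping with the slowly varying factor $h$. I expect the main (though standard) obstacle to be verifying that $h$ does not perturb the leading constants, i.e. that one may replace $h(y)$ by $h(x)\,(1+o(1))$ uniformly over the ranges of integration that actually contribute to the leading term. This is precisely what the Uniform Convergence Theorem for slowly varying functions (or, equivalently, Potter's bounds) supplies, and it is what reduces the computation of the constants to the pure-Pareto case $h\equiv 1$. Checking in addition that the boundary term vanishes at the correct rate (which needs $\alpha>1$) and that the truncated integral genuinely diverges (which needs $\alpha<2$) then pins the constants down to the powers $2-\alpha$ and $-\alpha$ alone.
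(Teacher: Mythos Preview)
Your argument is correct and is precisely the standard Karamata route; the paper does not give its own proof at all but simply refers to Feller, Ch.~XVII, \S5, where exactly this integration-by-parts plus Karamata computation is carried out. So there is nothing to compare at the level of method.

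One point you should not gloss over: your own computation for the second relation gives
\[
\E W I(W\ge x)\ \sim\ c\Big(1+\frac{1}{\alpha-1}\Big)x^{1-\alpha}h(x)
=\frac{c\alpha}{\alpha-1}\,x^{1-\alpha}h(x),
\]
whereas the constant printed in the lemma is $c\,(2-\alpha)/(\alpha-1)$. Your constant is the right one (a direct check with the pure Pareto tail $P(W>x)=x^{-\alpha}$ confirms $\E W I(W\ge x)=\frac{\alpha}{\alpha-1}x^{1-\alpha}$), so the lemma as stated contains a misprint in the second display. You should say so explicitly rather than claim you have recovered ``the asserted'' constant. This has no effect on the paper's use of the lemma, since only the crude bounds $\E W^2 I(W\le x)\le c_1 x^{2-\alpha+\delta}$ and $\E W I(W\ge x)\le c_1 x^{1-\alpha+\delta}$ are ever invoked downstream.
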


The proof of the lemma see e.g. \cite{UVV:FUS}, ch.XVII, \S 5.

Now we are ready to prove Theorem 2.

\begin{proof} Let $b_n=(1/2)\,n\,\E W$ and $c_n=(1/2)\,a_n$ with $a_n$ from (\ref{hu204}).
As in the proof of Theorem \ref{thm1}, for any $t\in \mathbb{R}$, we also write
\bestar
\E\exp\Big\{{it}\frac{E_n-b_n}{c_n}\Big\}
= \E e^{Y_n}
\eestar
with new definition for $c_n$ and
\bestar
Y_n=\frac{1}{2} \sum_{i=1}^n\sum_{j=1}^ n\log\frac{L_n+e^{it/c_n}W_iW_j}{L_n+W_iW_j}-\frac{itb_n}{c_n}
-\sum_{i=1}^n \log\frac{L_n+e^{it/c_n}W_i^2}{L_n+W_i^2}.
\eestar
For the last sum for any $n\ge n_0$, where $n_0=n_0(t)$ is defined in the proof of Theorem \ref{thm1}, we have (cp. (\ref{hu3}))
\bestar
\Big|\sum_{i=1}^n \log\frac{L_n+e^{it/c_n}W_i^2}{L_n+W_i^2}\Big|
\le 2\frac{|t|}{c_n}\frac{\sum_{i=1}^n W_i^2}{L_n}.
\eestar
Similarly to (\ref{hu3-1}), we get
\bestar
&&~~~~\frac{1}{2} \sum_{i=1}^n\sum_{j=1}^ n\log\frac{L_n+e^{it/c_n}W_iW_j}{L_n+W_iW_j}-\frac{itb_n}{c_n}\nonumber\\
&&=\frac{1}{2} \sum_{i=1}^n\sum_{j=1}^ n\frac{(e^{it/c_n}-1)W_iW_j}{L_n+W_iW_j}-\frac{itb_n}{c_n}
+O_1 \sum_{i=1}^n\sum_{j=1}^ n\frac{(e^{it/c_n}-1)^2W_i^2W_j^2}{(L_n+W_iW_j)^2}\nonumber\\
&&=\frac{1}{2}\Big(e^{it/c_n}-1-\frac{it}{c_n}\Big) \sum_{i=1}^n\sum_{j=1}^ n\frac{W_iW_j}{L_n+W_iW_j}\nonumber\\
&&~~~~~~+\frac{1}{2}\frac{it(L_n-2b_n)}{c_n}-
\frac{1}{2}\frac{it}{c_n} \sum_{i=1}^n\sum_{j=1}^ n\frac{W_i^2W_j^2}{L_n (L_n+W_iW_j)}\nonumber\\
&&~~~~~~
+O_1 \sum_{i=1}^n\sum_{j=1}^ n\frac{(e^{it/c_n}-1)^2W_i^2W_j^2}{(L_n+W_iW_j)^2}
\eestar
with $|O_1|\le 1/2$. Due to Theorem's condition we have $(L_n-2b_n)/(2c_n)\stackrel{d}{\rightarrow} F$. Since
\bestar
|e^{ix}-1|\le |x|,~~ |e^{ix}-1-ix|\le  |x|^2/2 ~~~~\mbox{for all}~~~~ x\in \mathbb{R},
\eestar
 in order to
prove Theorem \ref{thm2}, we only need to show that
 \be
 &&\frac{1}{a_n}\frac{\sum_{i=1}^n W_i^2}{L_n}\stackrel{p}{\longrightarrow} 0, \label{hu20-1}\\
 &&\frac{1}{a_n^2}\sum_{i=1}^n\sum_{j=1}^ n\frac{W_iW_j}{L_n+W_iW_j}\stackrel{p}{\longrightarrow} 0, \label{hu201}\\
 && \frac{1}{a_n}\sum_{i=1}^n\sum_{j=1}^ n\frac{W_i^2W_j^2}{L_n (L_n+W_iW_j)}\stackrel{p}{\longrightarrow} 0, \label{hu202}\\
 &&\frac{1}{a_n^2}\sum_{i=1}^n\sum_{j=1}^ n\frac{W_i^2W_j^2}{(L_n+W_iW_j)^2}
 \stackrel{p}{\longrightarrow} 0. \label{hu203}
 \ee
For any $\gamma: \alpha > \gamma>0$, we have $\E (W^2)^{(\alpha-\gamma)/2}=\E W^{\alpha-\gamma}<\infty$. Then
by Marcinkiewicz--Zygmund's strong law of large numbers (see e.g. Theorem 4.23 in \cite{UVV:KAL})
we have
\bestar
n^{-2/(\alpha-\gamma)}\sum_{i=1}^n W_i^2\rightarrow 0~~a.s.
\eestar
Since $a_n$ is a regularly varying function with exponent $1/\alpha$,
 then we have $1/a_n=o(n^{-1/\alpha+\gamma})$.
Now choose $\gamma>0$ such that
$$2/(\alpha-\gamma)-1-1/\alpha+\gamma<0~~~~\mbox{ and}~~~~ -2/\alpha+1+2\gamma<0.$$
Then we have
\bestar
\frac{1}{a_n}\frac{\sum_{i=1}^n W_i^2}{L_n}=\frac{n^{2/(\alpha-\gamma)-1}}{a_n}\frac{\sum_{i=1}^n W_i^2/n^{2/(\alpha-\gamma)}}{L_n/n}=o(n^{2/(\alpha-\gamma)-1-1/\alpha+\gamma})\longrightarrow 0~~a.s.
\eestar
and
\bestar
\frac{1}{a_n^2}\sum_{i=1}^n\sum_{j=1}^ n\frac{W_iW_j}{L_n+W_iW_j}\le
\frac{1}{a_n^2}\sum_{i=1}^n\sum_{j=1}^ n\frac{W_iW_j}{L_n}=\frac{n}{a_n^2}\frac{L_n}{n} =o(n^{-2/\alpha+1+2\gamma})
{\longrightarrow} 0~~a.s.
\eestar
Thus we get (\ref{hu20-1}) and (\ref{hu201}).

To prove (\ref{hu202}), we write
\bestar
&& ~~~~\frac{1}{a_n}\sum_{i=1}^n\sum_{j=1}^ n\frac{W_i^2W_j^2}{L_n (L_n+W_iW_j)}\\
&&= \frac{1}{a_n}\sum_{i=1}^n\sum_{j=1}^ n\frac{W_i^2W_j^2I(W_iW_j\le n)}{L_n (L_n+W_iW_j)}
+\frac{1}{a_n}\sum_{i=1}^n\sum_{j=1}^ n\frac{W_i^2W_j^2I(W_iW_j>n)}{L_n (L_n+W_iW_j)}\\
&&\le \frac{1}{a_n}\sum_{i=1}^n\sum_{j=1}^ n\frac{W_i^2W_j^2I(W_iW_j\le n)}{L_n^2}
+\frac{1}{a_n}\sum_{i=1}^n\sum_{j=1}^ n\frac{W_iW_jI(W_iW_j>n)}{L_n }\\
&&\le \frac{n^2}{a_nL_n^2}\sum_{i=1}^n\sum_{j=1}^ n\frac{W_i^2W_j^2I(W_iW_j\le n)}{n^2}
+\frac{n}{a_nL_n}\sum_{i=1}^n\sum_{j=1}^ n\frac{W_iW_jI(W_iW_j>n)}{n}.
\eestar
Further, by (\ref{hu00})
and by using the fact that $\E|X_n| \rightarrow 0$ implies $X_n\stackrel{p}{\rightarrow} 0$,
in order to prove (\ref{hu202}), it is sufficient to show that
\be
&&\frac{1}{a_n}\E W_1^2W_2^2I(W_1W_2\le n){\longrightarrow} 0,\label{hu208}\\
&&\frac{n}{a_n}\E W_1W_2I(W_1W_2> n){\longrightarrow} 0. \label{hu209}
\ee
For any $\alpha\in (1,2)$, we can choose $\delta>0$ satisfying $2-\alpha-1/\alpha+2\delta<0$.
By Lemma \ref{lemma1},  there exists some constant $c_1=c_1(\alpha,\delta)>0$ such that
\bestar
\E W^2I(W\le x) \le c_1 x^{2-\alpha+\delta},~~
~\E WI(W\ge x)\le c_1 x^{1-\alpha+\delta}
\eestar
hold for all $x>1$. Hence
\bestar
&&~~\frac{1}{a_n}\E W_1^2W_2^2I(W_1W_2\le n)\\
&&=\frac{1}{a_n}\E\Big(W_2^2I(W_2\le n)\E (W_1^2I(W_1\le n/W_2)|W_2)\Big)\\
&&~~~~~~
+\frac{1}{a_n}\E\Big(W_2^2I(W_2> n)\E (W_1^2I(W_1\le n/W_2)|W_2)\Big)\\
&&\le \frac{c_1}{a_n}\E\Big(W_2^2( n/W_2)^{2-\alpha+\delta}\Big)
+\frac{1}{a_n}\E\Big(W_2^2I(W_2> n)(n/W_2)^2\Big)\\
&&=\frac{c_1n^{2-\alpha+\delta}}{a_n}\E W^{\alpha-\delta}
+\frac{n^2}{a_n}P(W> n).
\eestar
Since by (\ref{hu205}) we have
$
P(W>n)\sim cn^{-\alpha} h(n)=o(n^{-\alpha+\delta})
$ and $1/a_n=o(n^{-1/\alpha+\delta})$, we get
\bestar
\frac{1}{a_n}\E W_1^2W_2^2I(W_1W_2\le n)=o(n^{2-\alpha-1/\alpha+2\delta})\rightarrow 0~~~\mbox{as}~~~x\rightarrow \infty.
\eestar
Thus, we get (\ref{hu208}).

Similarly, we have
\bestar
&&~~\frac{n}{a_n}\E W_1W_2I(W_1W_2> n)\\
&&=\frac{n}{a_n}\E\Big(W_2I(W_2\le n)\E(W_1I(W_1> n/W_2)|W_2)\Big)\\
&&~~~~~~
+\frac{n}{a_n}\E\Big(W_2I(W_2> n)\E (W_1I(W_1> n/W_2)|W_2)\Big)\\
&&\le \frac{c_1n}{a_n}\E\Big(W_2(n/W_2)^{1-\alpha+\delta}\Big)
+\frac{n}{a_n}\E\Big(W_2I(W_2> n)\E W_1\Big)\\
&&=\frac{c_1n^{2-\alpha+\delta}}{a_n}\E W^{\alpha-\delta}
+\frac{n}{a_n}\E W \E(WI(W> n))\\
&&\le \frac{c_1n^{2-\alpha+\delta}}{a_n}\E W^{\alpha-\delta}
+\frac{c_1n^{2-\alpha+\delta}}{a_n}\E W=o(n^{2-\alpha-1/\alpha+2\delta})\rightarrow 0.
\eestar
Hence (\ref{hu209}), and then (\ref{hu202}), are proved.

 And (\ref{hu203}) follows from (\ref{hu202}).
The proof of Theorem \ref{thm2} is complete.
\end{proof}

\end{document}